\author[P.~Miska]{Piotr Miska}
\address{Jagiellonian University, Faculty of Mathematics and Computer Science, Krak\'ow, Poland}
\email{piotrmiska91@gmail.com}
\author[N.~Murru]{Nadir Murru}
\address{Universit\`a degli Studi di Torino, Department of Mathematics, Torino, Italy}
\email{nadir.murru@unito.it}
\urladdr{\url{http://orcid.org/0000-0003-0509-6278}}
\author[C.~Sanna]{Carlo Sanna}
\address{Universit\`a degli Studi di Torino, Department of Mathematics, Torino, Italy}
\email{carlo.sanna.dev@gmail.com}
\urladdr{\url{http://orcid.org/0000-0002-2111-7596}}
\keywords{denseness, $p$-adic numbers, polynomials, quotient set, sum of powers}
\subjclass[2010]{Primary: 11B05; Secondary: 11B83}
\title{On the $p$-adic denseness of the quotient set of a polynomial image}
\newtheorem{thm}{Theorem}[section]
\newtheorem{lem}[thm]{Lemma}
\newtheorem{cor}[thm]{Corollary}
\newtheorem{prop}[thm]{Proposition}
\newtheorem{que}[thm]{Question}
\theoremstyle{remark}
\newtheorem{rmk}[thm]{Remark}
\newtheorem{exe}[thm]{Example}
\begin{document}

\begin{abstract}
The \emph{quotient set}, or \emph{ratio set}, of a set of integers $A$ is defined as 
\begin{equation*}
R(A) := \left\{a/b : a,b \in A,\; b \neq 0\right\} .
\end{equation*}
We consider the case in which $A$ is the image of $\mathbb{Z}^+$ under a polynomial $f \in \mathbb{Z}[X]$, and we give some conditions under which $R(A)$ is dense in $\mathbb{Q}_p$. 
Then, we apply these results to determine when $R(S_m^n)$ is dense in $\mathbb{Q}_p$, where $S_m^n$ is the set of numbers of the form $x_1^n + \cdots + x_m^n$, with $x_1, \dots, x_m \geq 0$ integers. 
This allows us to answer a question posed in [Garcia \textit{et~al.}, $p$-adic quotient sets, Acta Arith.~\textbf{179}, 163--184].
We end leaving an open question.
\end{abstract}

\maketitle

\section{Introduction}

The \emph{quotient set}, also known as \emph{ratio set}, of a set of integers $A$ is defined as
\begin{equation*}
R(A) := \left\{ \frac{a}{b} : a, b \in A, \; b \neq 0 \right\} .
\end{equation*}
The question of when $R(A)$ is dense in $\mathbb{R}^+$ is a classical topic and has been studied by many researchers (see, e.g., \cite{MR3229105, MR1475512, MR1390582, MR2843990, MR2549381, MR1197643, MR0242756, MR1659159}).

Recently, some authors approached the study of the denseness of $R(A)$ in the field of \mbox{$p$-adic} numbers $\mathbb Q_p$. 
Garcia and Luca~\cite{MR3593645} proved that the quotient set of the Fibonacci numbers is dense in $\mathbb{Q}_p$, and Sanna~\cite{MR3668396} extended this result to the $k$-generalized Fibonacci numbers. 
In~\cite{MR3670202}, the denseness of $R(A)$ in $\mathbb{Q}_p$ is studied when $A$ is the set of values of a Lucas sequence, the set of positive integers which are sum of $k$ squares, respectively $k$ cubes, or the union of two geometric progressions.
Moreover, Miska and Sanna~\cite{MS18} proved that, given any partition $A_1, \dots, A_k$ of $\mathbb{Z}^+$, for all prime numbers $p$ but at most $\lfloor \log_2 k\rfloor$ exceptions at least one of $R(A_1), \dots, R(A_k)$ is dense in $\mathbb{Q}_p$.

In this paper, we focus on the study of the denseness of $R(A)$ in $\mathbb{Q}_p$ when $A$ is the image of $\mathbb{Z}^+$ under a polynomial $f \in \mathbb{Z}[X]$.  
For the sake of notation, we put $R_f := R(f(\mathbb{Z}^+))$ for any function $f : \mathbb{Z} \to \mathbb{Q}_p$.
The following easy lemma provides a necessary condition under which $R_f$ is dense in $\mathbb{Q}_p$.

\begin{lem}\label{lem:nec-dens-cond}
Let $f : \mathbb Z_p \rightarrow \mathbb Q_p$ be a continuous function. If $R_f$ is dense in $\mathbb Q_p$, then $f$ has a zero in $\mathbb Z_p$.
\end{lem}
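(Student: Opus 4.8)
The plan is to exploit the compactness of $\mathbb{Z}_p$ together with the continuity of $f$. As a harmless first reduction, I would dispose of the case in which $f$ has a zero among the positive integers, since such a zero already lies in $\mathbb{Z}_p$ and there is nothing to prove; thus I may assume every element of $f(\mathbb{Z}^+)$ is a nonzero $p$-adic number, and in particular the denominators appearing below are nonzero.

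The key input is that $f(\mathbb{Z}_p)$ is compact, being the continuous image of the compact set $\mathbb{Z}_p$, hence bounded: there is a constant $M > 0$ with $|f(x)|_p \le M$ for all $x \in \mathbb{Z}_p$, and a fortiori for all $x \in \mathbb{Z}^+$. Since $0 \in \mathbb{Q}_p$ and $R_f$ is dense in $\mathbb{Q}_p$, for each positive integer $n$ I can choose $a_n, b_n \in \mathbb{Z}^+$ with $\left|f(a_n)/f(b_n)\right|_p < 1/n$. Multiplying through by $|f(b_n)|_p \le M$ gives $|f(a_n)|_p < M/n$, so $f(a_n) \to 0$ in $\mathbb{Q}_p$.

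To conclude, apply the (sequential) compactness of $\mathbb{Z}_p$ to the sequence $(a_n)_{n}$ of positive integers: it has a subsequence $a_{n_k}$ converging to some $\alpha \in \mathbb{Z}_p$. Continuity of $f$ then yields $f(\alpha) = \lim_{k \to \infty} f(a_{n_k}) = 0$, so $\alpha$ is the desired zero of $f$ in $\mathbb{Z}_p$.

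I do not anticipate a serious obstacle. The only two points needing a touch of care are: first, that one can extract from $R_f$ a sequence of ratios tending to $0$ specifically — this is immediate because $0$ itself lies in $\mathbb{Q}_p$ and $R_f$ is dense; and second, the passage from smallness of the ratio $f(a_n)/f(b_n)$ to smallness of the numerator $f(a_n)$, which is exactly where the boundedness of $f$ on the compact space $\mathbb{Z}_p$ enters. It is worth noting that this boundedness is essential: for a function only assumed continuous on $\mathbb{Z}^+$ the analogous statement need not hold, which is why the hypothesis is phrased for $f \colon \mathbb{Z}_p \to \mathbb{Q}_p$.
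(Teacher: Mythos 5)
Your proof is correct and follows essentially the same route as the paper's: extract a sequence with $f(x_n)\to 0$ from the density of $R_f$ near $0$, pass to a convergent subsequence by compactness of $\mathbb{Z}_p$, and conclude by continuity. The only difference is that you make explicit the boundedness of $f(\mathbb{Z}_p)$ needed to pass from small ratios to small numerators, a step the paper leaves implicit.
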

\begin{proof}
Since $R_f$ is dense in $\mathbb{Q}_p$, there exists a sequence of integers $(x_n)_{n \geq 0}$ such that $f(x_n) \to 0$ (in the $p$-adic topology) as $n \to \infty$.
By the compactness of $\mathbb{Z}_p$, there exists a subsequence $(x_{n_k})_{k \geq 0}$ converging to some $x_\infty \in \mathbb{Z}_p$. 
Since $f$ is continuous, we get $f(x_\infty) = 0$, as desired. 
\end{proof}

Our first result is a sufficient condition under which $R_f$ is dense in $\mathbb{Q}_p$.
We postpone its proof to Section~\ref{sec:2}.

\begin{thm}\label{thm:dens-cond}
Let $f : \mathbb Z_p \rightarrow \mathbb Q_p$ be an analytic function and let $z_1, z_2 \in \mathbb{Z}_p$ be two (not necessarily distinct) zeros of $f$ of multiplicities $\mu_1, \mu_2$, respectively. 
If $\mu_1,\mu_2$ are coprime, then $R_f$ is dense in $\mathbb Q_p$.
\end{thm}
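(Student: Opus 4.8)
The plan is to show that for every $\alpha \in \mathbb{Q}_p^\times$ and every integer $N$ some element of $R_f$ lies within $p$-adic distance $p^{-N}$ of $\alpha$; since $\mathbb{Q}_p^\times$ is dense in $\mathbb{Q}_p$, this will prove the theorem. Throughout, let $v_p$ denote the $p$-adic valuation on $\mathbb{Q}_p$. First I would localize $f$ at the two zeros: the expansion of $f$ about $z_i$ has the form $f(z_i + t) = t^{\mu_i} g_i(t)$ with $g_i$ analytic near $0$ and $c_i := g_i(0) \neq 0$ (this factorization being precisely what it means for $z_i$ to be a zero of multiplicity $\mu_i$); in particular $g_i$ is bounded on a neighbourhood of $0$ and $g_i(t) \to c_i$ as $t \to 0$. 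Write $c_1/c_2 = p^{v} c_0$ with $v = v_p(c_1/c_2)$ and $c_0 \in \mathbb{Z}_p^\times$. Then for $x, y \in \mathbb{Z}^+$ such that $t_1 := x - z_1$ and $t_2 := y - z_2$ have small (but nonzero) valuation,
\[
\frac{f(x)}{f(y)} \;=\; \frac{t_1^{\mu_1}\, g_1(t_1)}{t_2^{\mu_2}\, g_2(t_2)} ,
\]
and writing $t_i = p^{a_i} u_i$ with $u_i \in \mathbb{Z}_p^\times$, the right-hand side equals $p^{\,\mu_1 a_1 - \mu_2 a_2 + v}\, c_0\, u_1^{\mu_1} u_2^{-\mu_2}\cdot (g_1(t_1)/c_1)(g_2(t_2)/c_2)^{-1}$.

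Two elementary observations then drive the construction. First, since $\gcd(\mu_1,\mu_2) = 1$, the set $\{\mu_1 a - \mu_2 b : a, b \in \mathbb{Z}_{\geq 0}\}$ is all of $\mathbb{Z}$, and for each prescribed value the nonnegative integers $a, b$ may moreover be chosen arbitrarily large (replace $(a,b)$ by $(a+\mu_2, b+\mu_1)$). Second, in any abelian group $G$ the subgroup generated by $G^{\mu_1}$ and $G^{\mu_2}$ equals $G^{\gcd(\mu_1,\mu_2)}$, so for $G = \mathbb{Z}_p^\times$ one gets $\{u_1^{\mu_1} u_2^{-\mu_2} : u_1, u_2 \in \mathbb{Z}_p^\times\} = \mathbb{Z}_p^\times$. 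Now given $\alpha = p^n \gamma$ with $n \in \mathbb{Z}$ and $\gamma \in \mathbb{Z}_p^\times$, I would use the second observation to pick $u_1, u_2 \in \mathbb{Z}_p^\times$ with $c_0 u_1^{\mu_1} u_2^{-\mu_2} = \gamma$, the first to pick large $a_1, a_2 \in \mathbb{Z}_{\geq 0}$ with $\mu_1 a_1 - \mu_2 a_2 = n - v$, and then — using that $\mathbb{Z}^+$ is dense in $\mathbb{Z}_p$ — pick $x, y \in \mathbb{Z}^+$ with $x \equiv z_1 + p^{a_1} u_1$ and $y \equiv z_2 + p^{a_2} u_2$ modulo a sufficiently high power of $p$. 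Substituting into the identity above gives $f(x)/f(y) = \alpha\, w$, where $w \in \mathbb{Z}_p^\times$ is the product of $(g_1(t_1)/c_1)(g_2(t_2)/c_2)^{-1}$ and of the units arising from replacing $t_i$ by the nearby $p^{a_i} u_i$; each of these factors is $\equiv 1$ modulo a power of $p$ that tends to infinity as $a_1, a_2$ and the approximation precision grow, so choosing everything large enough yields $v_p(f(x)/f(y) - \alpha) \geq N$. (Note $f(x), f(y) \neq 0$, since $t_1, t_2 \neq 0$ and each $g_i(t_i)$ is a unit multiple of $c_i$, so $f(x)/f(y)$ really is an element of $R_f$.)

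The conceptual core is the second observation: this is the only place the coprimality of $\mu_1, \mu_2$ is used, and it is what prevents a single zero of multiplicity $\mu > 1$ from sufficing — from the neighbourhood of such a zero $f$ produces only ratios of $p$-adic valuation in $\mu\mathbb{Z}$. The step that needs genuine care is the final estimate: one must verify that all the ``correction'' units making up $w$ can be pushed simultaneously to be $\equiv 1$ modulo an arbitrarily large power of $p$, which relies on being able to take $a_1, a_2$ large (the first observation) together with the continuity of $g_1, g_2$ at $0$ and their boundedness near $0$.
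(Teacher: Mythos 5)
Your proposal is correct and follows essentially the same strategy as the paper's proof: factor $f$ near each zero as $(X-z_i)^{\mu_i}g_i(X)$, use B\'ezout (via coprimality of $\mu_1,\mu_2$) once to hit the prescribed valuation with arbitrarily large exponents and once to realize the prescribed unit as $u_1^{\mu_1}u_2^{-\mu_2}$, and control the remaining factors by continuity of the $g_i$. The only (cosmetic) difference is that you normalize $g_1(0)/g_2(0)$ into a unit and invoke the group identity $(\mathbb{Z}_p^\times)^{\mu_1}(\mathbb{Z}_p^\times)^{\mu_2}=\mathbb{Z}_p^\times$, where the paper directly sets the perturbations equal to powers $s^{h_i}$ of a single (not necessarily unit) element.
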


As an immediate consequence we have the following corollary.

\begin{cor}\label{cor:dens-cond}
If $f : \mathbb{Z}_p \to \mathbb{Q}_p$ is an analytic function with a simple zero in $\mathbb Z_p$, then $R_f$ is dense in $\mathbb{Q}_p$.
\end{cor}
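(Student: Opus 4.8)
The plan is to deduce the statement immediately from Theorem~\ref{thm:dens-cond}. Let $z \in \mathbb{Z}_p$ be a simple zero of $f$, i.e.\ a zero of multiplicity $1$. First I would invoke Theorem~\ref{thm:dens-cond} with the choice $z_1 = z_2 = z$; this is legitimate because the theorem explicitly permits its two zeros to coincide. With this choice we have $\mu_1 = \mu_2 = 1$, so the coprimality hypothesis $\gcd(\mu_1,\mu_2) = \gcd(1,1) = 1$ holds trivially. The theorem then gives that $R_f$ is dense in $\mathbb{Q}_p$, which is precisely the claim.

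There is essentially no obstacle to overcome: the only points to record are that ``simple zero'' is by definition a zero of multiplicity $1$, and that $1$ is coprime to $1$ (indeed to any positive integer). I should mention, as a stylistic alternative, that one could instead try to pair the simple zero $z_1$ with a \emph{second}, distinct zero $z_2$ of some multiplicity $\mu_2$, since $\gcd(1,\mu_2) = 1$ automatically; however, a nonzero analytic function on $\mathbb{Z}_p$ (or even a polynomial) need not possess a second zero, so forcing distinctness would require an unnecessary case distinction. Taking $z_1 = z_2$ is therefore the cleanest route, and the corollary follows in one line.
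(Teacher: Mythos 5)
Your proof is correct and matches the paper's intent exactly: the paper states the corollary as an immediate consequence of Theorem~\ref{thm:dens-cond}, obtained by taking $z_1 = z_2$ equal to the simple zero so that $\mu_1 = \mu_2 = 1$ and the coprimality hypothesis holds trivially. Nothing further is needed.
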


The above results make possible to completely characterize the linear and quadratic polynomials $f$ for which $R_f$ is dense in $\mathbb{Q}_p$.

\begin{prop}
Let $f \in \mathbb Z[X]$ be a polynomial of degree $1$ or $2$.
Then, $R_f$ is dense in $\mathbb Q_p$ if and only if $f$ has a simple zero in $\mathbb{Z}_p$.
\end{prop}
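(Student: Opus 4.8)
The plan is to prove both implications separately, using the tools already assembled. The ``if'' direction is immediate: if $f$ has a simple zero in $\mathbb{Z}_p$, then since a polynomial in $\mathbb{Z}[X]$ defines an analytic (indeed polynomial) function $\mathbb{Z}_p \to \mathbb{Q}_p$, Corollary~\ref{cor:dens-cond} applies directly and gives that $R_f$ is dense in $\mathbb{Q}_p$. So the content is entirely in the ``only if'' direction, and there I would argue by contraposition: assuming $f$ has degree $1$ or $2$ and does \emph{not} have a simple zero in $\mathbb{Z}_p$, I would show $R_f$ is not dense in $\mathbb{Q}_p$.

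First I would reduce to a primitive polynomial. Writing $f = c\,g$ with $c \in \mathbb{Z}$ and $g \in \mathbb{Z}[X]$ primitive, the sets $f(\mathbb{Z}^+)$ and $g(\mathbb{Z}^+)$ differ by the constant factor $c$, hence $R_f = R_g$; also $f$ and $g$ have the same zeros with the same multiplicities in $\mathbb{Z}_p$. So I may assume $f$ is primitive. Now split into cases on $\deg f$. If $\deg f = 1$, say $f(X) = aX + b$ with $\gcd(a,b)=1$, then the unique zero $-b/a \in \mathbb{Q}_p$ is automatically simple; it lies in $\mathbb{Z}_p$ iff $v_p(a) \le v_p(b)$, i.e.\ iff $p \nmid a$ (using primitivity, $p\nmid a$ or $p \nmid b$). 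If $f$ has no zero in $\mathbb{Z}_p$, then by Lemma~\ref{lem:nec-dens-cond} (a polynomial is continuous on $\mathbb{Z}_p$) $R_f$ is not dense, so we are done in the degree-$1$ case.

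For $\deg f = 2$, write $f(X) = aX^2 + bX + c$, primitive. By Lemma~\ref{lem:nec-dens-cond}, if $R_f$ is dense then $f$ has a zero $z_1 \in \mathbb{Z}_p$; if that zero is simple we are done, so suppose it is a double zero, i.e.\ $f(X) = a(X - z_1)^2$ in $\mathbb{Q}_p[X]$, so $z_1 \in \mathbb{Q}$ and in fact $2z_1 = -b/a$, $z_1^2 = c/a$, forcing $z_1 \in \mathbb{Z}_p$ to be the only zero and of multiplicity $2$. The task is then to show that for such an $f$, $R_f$ is \emph{not} dense in $\mathbb{Q}_p$. The key computation: for $x \in \mathbb{Z}^+$ we have $f(x) = a(x - z_1)^2$, so for $x, y \in \mathbb{Z}^+$ with $f(y) \ne 0$,
\begin{equation*}
\frac{f(x)}{f(y)} = \left(\frac{x - z_1}{y - z_1}\right)^{\!2},
\end{equation*}
which is a square in $\mathbb{Q}_p^\times$. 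Since the squares do not form a dense subset of $\mathbb{Q}_p$ (for instance, an element of $\mathbb{Q}_p^\times$ with odd $p$-adic valuation is never a limit of squares, because the valuation is locally constant and squares have even valuation), $R_f \subseteq (\mathbb{Q}_p^\times)^2 \cup \{0\}$ is not dense. Hence if $R_f$ is dense, the zero must be simple.

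The main obstacle, such as it is, is organizing the bookkeeping in the quadratic case: one must be careful that a zero furnished by Lemma~\ref{lem:nec-dens-cond} is a genuine $\mathbb{Z}_p$-point and that ``not a simple zero'' for a quadratic really does force the perfect-square factorization $f(X) = a(X-z_1)^2$ over $\mathbb{Q}_p$ with $z_1 \in \mathbb{Z}_p$ (rather than, say, two distinct zeros only one of which lies in $\mathbb{Z}_p$ --- but if $z_1 \in \mathbb{Z}_p$ is a zero of a quadratic and is not simple, it is a double zero, so this is automatic). Once that structural point is pinned down, the ``squares are not dense'' argument closes the case with essentially no computation.
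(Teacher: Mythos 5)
Your proposal is correct and follows essentially the same route as the paper: the ``if'' direction via Corollary~\ref{cor:dens-cond}, the degree-$1$ case and the no-zero case via Lemma~\ref{lem:nec-dens-cond}, and the double-zero quadratic case by observing that every element of $R_f$ is then a square (equivalently, has even $p$-adic valuation), hence $R_f$ misses all elements of odd valuation. The preliminary reduction to a primitive polynomial is harmless but unnecessary, since the factor $a$ cancels in the quotient anyway.
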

\begin{proof}
When $f$ has degree $1$, the thesis follows immediately from Lemma~\ref{lem:nec-dens-cond} and Corollary~\ref{cor:dens-cond}. 
Assume $f$ has degree $2$.
If $f$ has a simple zero in $\mathbb{Z}_p$, then $R_f$ is dense in $\mathbb{Q}_p$ by Corollary~\ref{cor:dens-cond}. 
On the other hand, if $f$ has no simple zeros in $\mathbb{Z}_p$, then we have two cases. 
In the first case, $f$ has no zeros in $\mathbb{Z}_p$.
Then, by Lemma~\ref{lem:nec-dens-cond}, $R_f$ is not dense in $\mathbb{Q}_p$. 
In the second case, $f$ has a zero in $\mathbb{Z}_p$ with multiplicity $2$, i.e., $f(x) = a(x-z)^2$, for some $a,z \in \mathbb{Z}_p$ with $a \neq 0$.
Consequently, $R_f$ is not dense in $\mathbb{Q}_p$, since the $p$-adic valuation of each element of $R_f$ is divisible by $2$.
\end{proof}

For polynomials of higher degrees, we can not exploit Lemma~\ref{lem:nec-dens-cond} and Corollary~\ref{cor:dens-cond} to determine if $R_f$ is dense in $\mathbb{Q}_p$.
For instance, consider the case of a polynomial of degree $3$ with a double root in $\mathbb Z_p$ and the other root not in $\mathbb{Z}_p$.
However, if we consider polynomials having all their roots in $\mathbb{Z}_p$, then we have the following result.

\begin{prop}\label{prop:31}
Let $f \in \mathbb Z[X]$ be a nonconstant polynomial splitting in $\mathbb{Z}_p$ and of degree less than $31$.
Then, $R_f$ is not dense in $\mathbb{Q}_p$ if and only if there exists an integer $n > 1$ which divides the multiplicity of each root of $f$.
\end{prop}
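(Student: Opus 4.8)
The plan is to handle the two implications separately, after writing
$f(X) = a\prod_{i=1}^{k}(X-z_i)^{\mu_i}$ with $a\in\mathbb{Z}\setminus\{0\}$, the $z_i\in\mathbb{Z}_p$ pairwise distinct, and each $\mu_i\geq 1$ — a decomposition available because $f$ is nonconstant and splits in $\mathbb{Z}_p$ — and to isolate the arithmetic heart of the matter as a statement about the multiset $\{\mu_1,\dots,\mu_k\}$.

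For the ``if'' direction, suppose $n>1$ divides every $\mu_i$. Then for each integer $x$ with $f(x)\neq 0$ the $p$-adic valuation satisfies $v_p(f(x)) = v_p(a) + \sum_{i=1}^{k}\mu_i\,v_p(x-z_i)\equiv v_p(a)\pmod{n}$, so every nonzero element of $R_f$ has valuation divisible by $n$. Hence $R_f$ is contained in $\{0\}\cup\{r\in\mathbb{Q}_p : v_p(r)\equiv 0\pmod{n}\}$, a closed proper subset of $\mathbb{Q}_p$ (it omits $p$), and so $R_f$ is not dense. This uses neither $\deg f<31$ nor the splitting hypothesis beyond the factorization above, and works for every prime $p$.

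For the ``only if'' direction I would argue the contrapositive: assuming $\gcd(\mu_1,\dots,\mu_k)=1$, I show $R_f$ is dense. If $k=1$, then $\mu_1=1$ and Corollary~\ref{cor:dens-cond} applies. If $k\geq 2$, then by Theorem~\ref{thm:dens-cond} it suffices to produce indices $i\neq j$ with $\gcd(\mu_i,\mu_j)=1$. Suppose no such pair exists, so the $\mu_i$ are pairwise non-coprime; since their overall gcd is $1$ there must be at least three of them, and moreover each $\mu_i$ must be divisible by at least two distinct primes (a $\mu_i$ equal to $1$ would be coprime to all the others, while a $\mu_i$ that is a prime power would force that prime into the overall gcd). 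Writing $S_i$ for the set of primes dividing $\mu_i$, we then have $|S_i|\geq 2$ and $S_i\cap S_j\neq\emptyset$ for all $i\neq j$, while $\bigcap_i S_i=\emptyset$. The claim is that this forces $\sum_{i=1}^{k}\mu_i\geq 31$, contradicting $\deg f<31$ and finishing the proof. Indeed, if some $|S_i|\geq 3$ then $\mu_i\geq 2\cdot 3\cdot 5=30$ and, since there are at least two further indices $j$ each with $\mu_j\geq 6$, the sum is at least $42$; otherwise every $|S_i|=2$, so the distinct sets among the $S_i$ form a pairwise-intersecting family of $2$-element sets, which must be either a star (all sets sharing a common point) or a subfamily of a triangle $\{a,b\},\{b,c\},\{c,a\}$, and the star case is excluded by $\bigcap_i S_i=\emptyset$. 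Thus every $S_i$ equals one of the three sets $\{a,b\},\{b,c\},\{c,a\}$ built from three distinct primes, and (again because $\bigcap_i S_i=\emptyset$) all three of them occur; taking those primes to be $2,3,5$ gives $\sum_i\mu_i\geq 2\cdot 3+3\cdot 5+5\cdot 2=31$.

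The one genuinely delicate point is the combinatorial claim just used — equivalently, that pairwise non-coprime positive integers of overall gcd $1$ are at least three in number and have sum at least $31$, the extremal instance being $\{6,10,15\}$, whose sum $31$ is exactly the degree threshold in the statement. Its proof rests on the elementary classification of pairwise-intersecting families of $2$-sets into stars and triangles, together with the bookkeeping needed because the $\mu_i$, and hence the sets $S_i$, may repeat; everything else is a direct appeal to Theorem~\ref{thm:dens-cond} and Corollary~\ref{cor:dens-cond} or the short valuation computation above.
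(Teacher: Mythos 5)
Your proof is correct and follows essentially the same route as the paper's: the same valuation argument for the ``if'' direction, and for the converse the same reduction to finding two coprime multiplicities so that Theorem~\ref{thm:dens-cond} applies, with the degree bound coming from the observation that pairwise non-coprime multiplicities of overall gcd $1$ must number at least three and each have two distinct prime factors. The only difference is cosmetic: the paper finishes by a short ad hoc case analysis (an odd $\mu_1\in\{15,21\}$, a $\mu_2\in\{10,14\}$ prime to $3$, and $\mu_3\geq 6$), whereas you invoke the star/triangle classification of pairwise-intersecting $2$-sets; both isolate the same extremal configuration $\{6,10,15\}$ of sum $31$.
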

\begin{proof}
Let $\mu_1, \dots, \mu_s$ be the multiplicities of the roots of $f$.
If there exists an integer $n > 1$ dividing all $\mu_1, \dots, \mu_s$, then $f = ag^n$, for some $a \in \mathbb{Z}\setminus\{0\}$ and $g \in \mathbb{Z}[X]$.
Consequently, $R_f$ is not dense in $\mathbb{Q}_p$, since the $p$-adic valuation of each element of $R_f$ is divisible by $n$.
Now suppose that there exists no integer $n > 1$ dividing all $\mu_1, \dots, \mu_s$.
We shall prove that $\gcd(\mu_i, \mu_j) = 1$ for some $i,j$. In this way, by Theorem~\ref{thm:dens-cond}, it follows that $R_f$ is dense in $\mathbb{Q}_p$.
For the sake of contradiction, assume $\gcd(\mu_i, \mu_j) > 1$ for all $i,j$.
In particular, we have $s \geq 3$, and that each $\mu_i$ has at least two distinct prime factors.
Also, at least one of $\mu_1, \ldots, \mu_s$ is odd.
Without loss of generality, we can assume $\mu_1$ odd.
Thus $\mu_1 \in \{15, 21\}$, and at least one of $\mu_2, \ldots, \mu_s$ is not divisible by $3$.
Without loss of generality, we can assume $\mu_2$ not divisible by $3$.
Thus $\mu_2 \in \{10, 14\}$.
Since $\mu_3$ has at least two distinct prime factors, $\mu_3 \geq 6$ and consequently $\deg f = \mu_1 + \cdots + \mu_s > 30$, absurd. 
\end{proof}

\begin{rmk}
Proposition~\ref{prop:31} is optimal in the sense that there exists a polynomial $f \in \mathbb{Z}[X]$ of degree $31$, splitting in $\mathbb{Z}_p$, with the greatest common divisor of the multiplicities of its roots equal to $1$, but such that $R_f$ is not dense in $\mathbb{Q}_p$.
Indeed, consider 
\begin{equation*}
f(X)= (X + 1)^6 (X + 2)^{10}(X + 3)^{15} .
\end{equation*}
Then, for $p > 2$ (respectively $p = 2$) the $p$-adic valuation of each element of $f(\mathbb{Z}^+)$ is of the form $6n$, $10n$, or $15n$ (respectively $10n$, $6n+15$, or $15n + 6$), for some integer $n \geq 0$.
Therefore, no element of $R_f$ has $p$-adic valuation equal to $1$ (respectively $2$), and $R_f$ is not dense in $\mathbb{Q}_p$. 
\end{rmk}

\begin{rmk}
Using the same reasonings as in the proof of Proposition~\ref{prop:31}, one can prove a slightly more general statement: 
Given $f = gh$, where $g,h \in \mathbb{Z}[X]$ are such that $g$ splits in $\mathbb{Z}_p$, $1 \leq \deg g \leq 30$, and the $p$-adic valuation of $h$ is constant, we have that $R_f$ is not dense in $\mathbb{Q}_p$ if and only if there does not exist an integer $n > 1$ dividing all the multiplicities of the roots of $g$. 
\end{rmk}

For integers $m,n \geq 2$, define the set
\begin{equation*}
S_m^n := \left\{x_1^n + \cdots + x_m^n : x_1, \ldots,x_m \in \mathbb{Z}_{\geq 0}\right\} .
\end{equation*}
The authors of~\cite{MR3670202} considered $n = 2,3$ and proved the following results~\cite[Theorems~4.1 and~4.2]{MR3670202}.
(Actually, there is a small error, here corrected, in \cite[Theorem~4.2]{MR3670202}, see Remark~\ref{rmk:mistake} below.)

\begin{thm}\label{thm:squares-cubes}
For all prime numbers $p$, we have:
\begin{enumerate}[\phantom{m}(a)]
\setlength{\itemsep}{0.5em}
\item $R(S_2^2)$ is dense in $\mathbb{Q}_p$ if and only if $p \equiv 1 \pmod 4$.
\item $R(S_m^2)$ is dense in $\mathbb{Q}_p$ for all integers $m \geq 3$.
\item $R(S_m^3)$ is dense in $\mathbb{Q}_p$ for all integers $m \geq 2$.
\end{enumerate}
\end{thm}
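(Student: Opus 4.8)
The plan is to establish the "dense" directions by producing, for each relevant triple $(m,n,p)$, a polynomial $f \in \mathbb{Z}[X]$ with $f(\mathbb{Z}^+) \subseteq S_m^n$ that has a simple zero in $\mathbb{Z}_p$: then $R_f \subseteq R(S_m^n)$, and since a superset of a dense set is dense, Corollary~\ref{cor:dens-cond} (polynomials being analytic) forces $R(S_m^n)$ to be dense in $\mathbb{Q}_p$. The "not dense" part, which occurs only in~(a), will instead come from an elementary analysis of the $p$-adic valuations and unit parts of sums of two squares.

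Part~(c) is immediate from this scheme. I would take $f(X) = X^3 + 1^3$, so that $f(\mathbb{Z}^+) \subseteq S_2^3 \subseteq S_m^3$ for every $m \geq 2$; the point $-1$ is a zero of $f$ with $f'(-1) = 3 \neq 0$ in $\mathbb{Q}_p$ for every prime $p$ (including $p = 3$, where $3$ is merely a uniformizer), hence a simple zero, and Corollary~\ref{cor:dens-cond} gives~(c) for all $m \geq 2$ and all $p$ at once.

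For part~(b) I would split on $m$. If $m \geq 4$, Lagrange's four-square theorem gives $S_4^2 = \mathbb{Z}_{\geq 0}$, so $R(S_m^2) \supseteq R(S_4^2) \supseteq \mathbb{Q}_{>0}$, which is dense in $\mathbb{Q}_p$ for every $p$. If $m = 3$ and $p$ is odd, I would pick $a,b \in \{0,\dots,p-1\}$ with $a^2 + b^2 \equiv -1 \pmod p$ (possible since the squares and the values $-1 - x^2$ each form a subset of $\mathbb{F}_p$ of size $(p+1)/2$, so they meet) and set $f(X) = X^2 + (a^2+b^2)$; then $a^2+b^2 \geq 1$, $f(\mathbb{Z}^+) \subseteq S_3^2$, and $-(a^2+b^2) \equiv 1 \pmod p$ is a square in $\mathbb{Z}_p^\times$ by Hensel's lemma, so $f$ has the two distinct nonzero — hence simple — zeros $\pm\sqrt{-(a^2+b^2)}$ in $\mathbb{Z}_p$, and Corollary~\ref{cor:dens-cond} applies. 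The remaining case $m = 3$, $p = 2$ is the step I expect to be the main obstacle: here $x^2+y^2+z^2$ is anisotropic over $\mathbb{Q}_2$, so no polynomial that is a sum of three squares of polynomials in $\mathbb{Z}[X]$ has a simple zero in $\mathbb{Z}_2$, and one must argue directly. I would use the classical description $S_3^2 = \{n \geq 0 : n \neq 4^a(8b+7)\}$, which shows that $S_3^2$ contains every positive integer whose $2$-adic valuation is odd (and every $2^\ell$). Then, given $\alpha = 2^j u$ with $u \in \mathbb{Z}_2^\times$ and given $k \geq 1$, I would choose $\ell \geq 0$ with $\ell + j \geq 1$ and $\ell+j$ odd, set $t = 2^\ell \in S_3^2$ and $s = 2^{\ell+j} u'$ with $u'$ a positive odd integer $\equiv u \pmod{2^k}$; then $v_2(s) = \ell+j$ is odd so $s \in S_3^2$, and $s/t = 2^j u'$ approximates $\alpha$ to within $2^{-j-k}$, while $t = 1$, $s = 2^N$ with $N$ odd and large handles $\alpha = 0$. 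This yields denseness of $R(S_3^2)$ in $\mathbb{Q}_2$, completing~(b).

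For part~(a), if $p \equiv 1 \pmod 4$ then $-1$ is a square in $\mathbb{Z}_p$, so $f(X) = X^2 + 1^2$ has the simple zero $\sqrt{-1} \in \mathbb{Z}_p$ and $f(\mathbb{Z}^+) \subseteq S_2^2$, whence $R(S_2^2)$ is dense by Corollary~\ref{cor:dens-cond}. Conversely, if $p \equiv 3 \pmod 4$, then $p \mid a^2+b^2$ forces $p \mid a$ and $p \mid b$, so a descent shows $v_p$ is even on $S_2^2 \setminus \{0\}$, hence on all of $R(S_2^2)$, and $p$ (of valuation $1$) is not approximated; if $p = 2$, an induction on $v_2(a^2+b^2)$ shows the $2$-adic unit part of every element of $S_2^2$ is $\equiv 1$ or $5 \pmod 8$, so the same holds for every element of $R(S_2^2)$ (as $\{1,5\}$ is a subgroup of $(\mathbb{Z}/8\mathbb{Z})^\times$), and e.g. $3$ is not approximated. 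In both cases $R(S_2^2)$ is not dense in $\mathbb{Q}_p$, finishing~(a). Apart from the $p=2$, $m=3$ case of~(b), every step reduces to exhibiting an explicit polynomial with a simple $p$-adic zero or to a short congruence computation.
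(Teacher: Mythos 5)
Your proposal is correct. Note, however, that the paper does not actually prove Theorem~\ref{thm:squares-cubes}: it imports it from \cite[Theorems~4.1 and~4.2]{MR3670202} (with the correction discussed in Remark~\ref{rmk:mistake}), so there is no in-paper argument to compare against. Your derivation is a sound self-contained substitute, and it is very much in the spirit of the paper's own machinery: the dense directions in (a), (c), and in (b) for $p$ odd are obtained exactly as in the proof of Theorem~\ref{thm:power-sums}(a), by exhibiting a one-variable specialization $x^n + c$ with a simple zero in $\mathbb{Z}_p$ and invoking Corollary~\ref{cor:dens-cond}; the non-dense direction of (a) is the same valuation/unit-part obstruction the paper uses elsewhere (note that for $p\equiv 3\pmod 4$ the descent is not even needed once you know $p\mid a^2+b^2$ forces $p\mid a,b$: that already gives $\nu_p(a^2+b^2)\in 2\mathbb{Z}$ directly by induction, which is what you do). You correctly identified that the only case not reachable by the simple-zero method is $(m,n,p)=(3,2,2)$, where $x^2+y^2+z^2$ is anisotropic over $\mathbb{Q}_2$; your appeal to the Gauss--Legendre three-squares theorem there is a genuine external input, but it is the same input used in the original proof in \cite{MR3670202}, and the approximation you build from integers of odd $2$-adic valuation is complete (including the $\alpha=0$ case). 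All congruence computations ($a^2+b^2\equiv-1\pmod p$ by counting, unit parts of sums of two squares lying in the subgroup $\{1,5\}\bmod 8$) check out.
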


For all integers $n, b \geq 2$, let $\gamma(n, b)$ denote the smallest positive integer $g$ such that for every $a \in \mathbb{Z}$ the equation
\begin{equation}\label{equ:Xg}
X_1^n + \cdots + X_g^n \equiv a \pmod b
\end{equation}
has a solution.
Furthermore, let $\theta(n, b)$ be the smallest positive integer $g$ such that for $a = 0$ the equation \eqref{equ:Xg} has a solution with at least one of $X_1, \dots, X_g$ coprime with $b$.
The quantities $\gamma(n, b)$, $\theta(n, b)$ have been studied in regard to analogs of Waring's problem modulo~$p$ (see, e.g., \cite{MR0439787,MR0424734}).

We give an effective criterion to establish if $R(S_m^n)$ is dense in $\mathbb{Q}_p$.
We postpone its proof to Section~\ref{sec:3}.

\begin{thm}\label{thm:power-sums}
Let $m, n \geq 2$ be integers, let $p$ be a prime number, and put $k := \nu_p(n)$.
\begin{enumerate}[\phantom{m}(a)]
\setlength{\itemsep}{0.5em}
\item If $m \geq \theta(n, p^{2k + 1})$, then $R(S_m^n)$ is dense in $\mathbb{Q}_p$.
\item If $m < \theta(n, p^{2k + 1})$ and $(n, p) \notin \{(2, 2), (4, 2), (8, 2), (16, 2)\}$, then $R(S_m^n)$ is not dense in~$\mathbb{Q}_p$.
\item $R(S_m^2)$ is dense in $\mathbb{Q}_2$ if and only if $m \geq 3$.
\item $R(S_m^4)$ is dense in $\mathbb{Q}_2$ if and only if $m \geq 8$.
\item $R(S_m^8)$ is dense in $\mathbb{Q}_2$ if and only if $m \geq 16$.
\item $R(S_m^{16})$ is dense in $\mathbb{Q}_2$ if and only if $m \geq 64$.
\end{enumerate}
\end{thm}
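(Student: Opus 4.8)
The plan is to prove Theorem~\ref{thm:power-sums} by reducing the denseness question to a congruence‐solvability statement and then analyzing the relevant moduli. First I would establish a general principle: $R(S_m^n)$ is dense in $\mathbb{Q}_p$ if and only if for every $\ell \geq 1$ there exist elements of $S_m^n$ of $p$-adic valuation exactly $1$ (equivalently, of valuation exactly $j$ for every $j$), modulo arbitrarily high powers of $p$. More precisely, denseness is equivalent to the statement that $1 \in \overline{R(S_m^n)}$, which unwinds to: for each $e$ there are $a, b \in S_m^n$ with $\nu_p(a) = \nu_p(b)+1$ and $a/b \equiv 1 \pmod{p^e}$ in the obvious scaled sense. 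The key observation is that an element $x_1^n + \cdots + x_m^n$ has a controlled $p$-adic valuation precisely when we can make the sum $\equiv 0$ with not all $x_i$ divisible by $p$ — this is exactly what $\theta(n, p^{2k+1})$ measures, the extra factor $p^{2k}$ appearing because when $\nu_p(x_i) = 1$ the term $x_i^n$ contributes valuation $n$, and to resolve valuations up to the ``critical'' level $n = p^k \cdot (\text{unit})$ one needs to work modulo $p^{2k+1}$.

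Second, for part~(a) I would show that if $m \geq \theta(n, p^{2k+1})$, then one can realize every residue class, and in particular produce sums with $p$-adic valuation taking all sufficiently large values and also value exactly equal to any target, by combining a solution of $X_1^n + \cdots + X_g^n \equiv 0 \pmod{p^{2k+1}}$ with one $X_i$ a unit (giving valuation exactly $2k+1$ after rescaling appropriately) together with the freedom to add $p^j \cdot y^n$ terms. Hensel-type lifting then upgrades the congruence solution to an actual element of $S_m^n$ with the prescribed valuation, and taking ratios of two such elements with consecutive valuations yields denseness via Lemma~\ref{lem:nec-dens-cond}'s converse direction reasoning. For part~(b), the contrapositive: if $m < \theta(n, p^{2k+1})$, then every representable sum that vanishes modulo $p^{2k+1}$ must have all terms divisible by $p$, which forces $\nu_p(x_1^n + \cdots + x_m^n)$ to lie in a restricted set of residues modulo $n$ (or modulo some divisor related to $n$), so that valuation $1$ — or whatever single value is needed — is unattainable; the excluded pairs $(n,p) \in \{(2,2),(4,2),(8,2),(16,2)\}$ are exactly those where the arithmetic of $\mathbb{Z}_2^\times$ ($(\mathbb{Z}/2^e)^\times$ not being cyclic, and $2$-adic squares/fourth/eighth/sixteenth powers behaving exceptionally) breaks the clean valuation‐residue dichotomy.

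Third, parts~(c)–(f) are the exceptional $2$-adic cases and must be handled by direct computation of $\theta(n, 2^{2k+1})$ for $n \in \{2, 4, 8, 16\}$ (where $k = 1, 2, 3, 4$ respectively, so the moduli are $2^3, 2^5, 2^7, 2^9$) together with a finer analysis showing that in these cases the threshold for denseness is not $\theta(n, 2^{2k+1})$ but rather $3, 8, 16, 64$ respectively. Here I would explicitly examine which $2$-adic valuations are achieved by sums of $m$ $n$-th powers of $2$-adic integers: for $n = 2$, a sum of three squares already hits valuation $1$ (via $1 + 1 + 2 = 4$? no — rather $1 + 4 + \cdots$; one checks $1^2+1^2+0^2 = 2$ has valuation $1$), so $m \geq 3$ suffices while $m = 2$ fails because $x^2 + y^2$ with both odd is $\equiv 2 \pmod 8$ forcing valuation exactly $1$ only in a way that does not give density — actually $R(S_2^2)$ is not dense in $\mathbb{Q}_2$ by Theorem~\ref{thm:squares-cubes}(a) since $2 \not\equiv 1 \pmod 4$. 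The analogous but heavier computations for $n = 4, 8, 16$ and the verification that $64 = 2^6$ is the correct threshold for $n = 16$ (reflecting that a $16$th power in $\mathbb{Z}_2$ is $\equiv 1 \pmod{2^9}$ for units, so one needs roughly $2^9 / (\text{something})$ many terms) will be the most laborious part.

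The main obstacle I anticipate is twofold: correctly identifying why the modulus is $p^{2k+1}$ rather than some other power of $p$ (getting the exponent right requires carefully tracking how valuations of individual $n$-th power terms interact when the bases have valuation $1$), and the finite but intricate $2$-adic casework in (c)–(f), particularly establishing the sharp thresholds $8, 16, 64$ rather than merely bounding them — this needs a precise description of the set of achievable $2$-adic valuations of $\sum x_i^n$ as a function of $m$, which in turn rests on knowing the structure of $n$-th powers in $\mathbb{Z}/2^e\mathbb{Z}$ for the relevant $e$. I would likely isolate the $2$-adic structure lemmas (e.g., ``a unit in $\mathbb{Z}_2$ is a $2^j$-th power iff it is $\equiv 1 \pmod{2^{j+2}}$'') as separate auxiliary claims before assembling the proof.
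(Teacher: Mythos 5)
Your overall architecture matches the paper's --- a congruence condition modulo $p^{2k+1}$ for sufficiency, a valuation-residue obstruction for necessity, and direct $2$-adic computation for $n\in\{4,8,16\}$ --- but the sufficiency argument in part~(a) has a genuine gap. Your plan is to produce elements of $S_m^n$ of every prescribed $p$-adic valuation and then take ratios of elements ``with consecutive valuations.'' That only shows $\nu_p(R(S_m^n))=\mathbb{Z}$, which is necessary but far from sufficient for density (the set $\{p^j\}$ already has this property). You also appeal to ``Lemma~\ref{lem:nec-dens-cond}'s converse direction,'' but that lemma is a one-way necessary condition with no valid converse, and your opening ``general principle'' that density is equivalent to $1\in\overline{R(S_m^n)}$ is vacuous, since $1=a/a$ always lies in $R(S_m^n)$. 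The missing ingredient is Corollary~\ref{cor:dens-cond}: from $m\ge\theta(n,p^{2k+1})$ one gets $x_1,\dots,x_m$ with $p\nmid x_1$ and $p^{2k+1}\mid x_1^n+\cdots+x_m^n$; freezing $x_2,\dots,x_m$ and setting $f(X)=X^n+x_2^n+\cdots+x_m^n$, the inequality $\nu_p(f(x_1))\ge 2k+1>2k=2\nu_p(f'(x_1))$ (this is precisely where the exponent $2k+1$ comes from) lets Hensel's lemma produce a \emph{simple} root of $f$ in $\mathbb{Z}_p$, and then Corollary~\ref{cor:dens-cond} --- not a valuation count --- gives density of $R_f\subseteq R(S_m^n)$. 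Without invoking Theorem~\ref{thm:dens-cond}/Corollary~\ref{cor:dens-cond}, or reproving something equivalent about controlling the unit part of $f(x)$ near the root, part~(a) does not close.

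Part~(b) is closer: your idea that failure of the congruence forces $\nu_p(\sum y_i^n)$ into few residue classes modulo $n$ is the paper's argument. But the conclusion only follows if the number of attainable classes for ratios, namely $4k+1$ (the classes $-2k,\dots,2k$), is strictly less than $n$, and proving $4k+1<n$ is exactly where the four excluded pairs $(2,2),(4,2),(8,2),(16,2)$ enter (the remaining borderline pairs $(3,3),(9,3),(5,5)$ are eliminated because explicit solutions such as $3^3\mid 1^3+8^3$ make the hypothesis of (b) vacuous there). Your explanation via non-cyclicity of $(\mathbb{Z}/2^e\mathbb{Z})^\times$ is not the operative mechanism. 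Finally, for (d)--(f) you correctly isolate the auxiliary fact that the $n$th powers of $\mathbb{Z}_2^*$ for $n=2^k$ are exactly $1+4n\mathbb{Z}_2$, but the proof still needs the resulting closed form of the closure, $T_m^n=\{2^{nv}(j+4ny): v\ge 0,\ 1\le j\le m,\ y\in\mathbb{Z}_2\}\cup\{0\}$, from which both the negative cases ($15\notin R(T_7^4)$, missing valuations for $T_{15}^8$ and $T_{63}^{16}$) and the positive cases (explicit ratios realizing all of $\mathbb{Z}_2$) are read off; as written these remain promissory notes rather than proofs.
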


\begin{exe}
Let us consider the denseness of $R(S_m^6)$ in $\mathbb{Q}_{11}$.
In order to apply Theorem~\ref{thm:power-sums}, we have to compute $\theta(6, 11)$.
The nonzero sixth powers modulo $11$ are $1$, $3$, $4$, $5$, and $9$.
Hence, the minimum positive integer $g$ such that the equation $X_1^6 + \cdots + X_g^6 \equiv 0 \pmod {11}$ has a solution, with at least one of $X_1, \dots, X_g$ not divisible by $11$, is $\theta(6, 11) = 3$.
Consequently, by points (a) and (b) of Theorem~\ref{thm:power-sums}, we have that $R(S_m^6)$ is dense in $\mathbb{Q}_{11}$ if and only if $m \geq 3$.
\end{exe}

\begin{exe}
Let us consider the denseness of $R(S_m^{10})$ in $\mathbb{Q}_2$.
In order to apply Theorem~\ref{thm:power-sums}, we have to compute $\theta(10, 8)$.
We have $x^{10} \equiv 1 \pmod 8$ for each odd integer $x$.
Hence, it follows easily that $\theta(10, 8) = 8$.
Consequently, by points (a) and (b) of Theorem~\ref{thm:power-sums}, we have that $R(S_m^{10})$ is dense in $\mathbb{Q}_2$ if and only if $m \geq 8$.
\end{exe}

For $m = 2$, we have the following corollary.

\begin{cor}\label{cor:minusone}
Let $n \geq 2$ be an integer, let $p$ be a prime number, and put $k = \nu_p(n)$.
Then $R(S_2^n)$ is dense in $\mathbb{Q}_p$ if and only if $-1$ is an $n$th power modulo $p^{2k + 1}$.
In particular, $R(S_2^n)$ is dense in $\mathbb{Q}_p$ whenever $n$ is odd.
\end{cor}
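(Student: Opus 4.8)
The plan is to derive the corollary from Theorem~\ref{thm:power-sums} applied with $m = 2$, using as a bridge the elementary fact that $\theta(n, p^{2k + 1}) = 2$ precisely when $-1$ is an $n$th power modulo $p^{2k + 1}$. First I would note that $\theta(n, b) \geq 2$ for every modulus $b > 1$, since $\theta(n, b) = 1$ would require $X_1^n \equiv 0 \pmod{b}$ with $X_1$ coprime to $b$, which is impossible; in particular $\theta(n, p^{2k + 1}) \geq 2$. Next I would establish the equivalence: if $y^n \equiv -1 \pmod{p^{2k + 1}}$ then $(X_1, X_2) = (y, 1)$ is a solution of $X_1^n + X_2^n \equiv 0 \pmod{p^{2k + 1}}$ with $X_2$ coprime to $p$, so $\theta(n, p^{2k + 1}) = 2$; conversely, if $X_1^n + X_2^n \equiv 0 \pmod{p^{2k + 1}}$ with, say, $p \nmid X_2$, then $X_2$ is a unit modulo $p^{2k + 1}$ and $(X_1 X_2^{-1})^n \equiv -1 \pmod{p^{2k + 1}}$.

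Given this, the argument splits into two cases. If $-1$ is an $n$th power modulo $p^{2k + 1}$, then $\theta(n, p^{2k + 1}) = 2 \leq m$, so $R(S_2^n)$ is dense in $\mathbb{Q}_p$ by Theorem~\ref{thm:power-sums}(a). If $-1$ is not an $n$th power modulo $p^{2k + 1}$, then $\theta(n, p^{2k + 1}) \geq 3 > 2 = m$; when $(n, p) \notin \{(2, 2), (4, 2), (8, 2), (16, 2)\}$, Theorem~\ref{thm:power-sums}(b) gives that $R(S_2^n)$ is not dense. For the four remaining pairs (all with $p = 2$ and $n \in \{2, 4, 8, 16\}$), $m = 2$ is strictly below the corresponding thresholds $3, 8, 16, 64$ of Theorem~\ref{thm:power-sums}(c)--(f), so $R(S_2^n)$ is again not dense; here one also checks by hand that $-1$ is genuinely not an $n$th power modulo $2^{2k + 1}$ --- for $n \in \{4, 8, 16\}$ this is because, writing $(\mathbb{Z}/2^{2k + 1}\mathbb{Z})^\times = \langle -1 \rangle \times \langle 5 \rangle$, every $n$th power has trivial $\langle -1 \rangle$-component while $-1$ does not, and for $n = 2$ one just lists the squares modulo $8$ --- so the biconditional remains consistent.

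Finally, the "in particular" clause is immediate: for odd $n$ we have $-1 \equiv (-1)^n \pmod{p^{2k + 1}}$, so $-1$ is an $n$th power and the first case applies. I expect the only delicate point to be the four exceptional $2$-adic pairs, where one must invoke the explicit parts (c)--(f) of Theorem~\ref{thm:power-sums} in place of part (b) and separately confirm the non-membership of $-1$ among the $n$th powers.
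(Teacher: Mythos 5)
Your proof is correct and follows essentially the same route as the paper: both reduce to Theorem~\ref{thm:power-sums} with $m=2$ via the observation that $\theta(n,p^{2k+1})=2$ exactly when $-1$ is an $n$th power modulo $p^{2k+1}$, and both check separately that $-1$ is not such a power in the four exceptional $2$-adic cases. The only cosmetic difference is that for those exceptional pairs the paper deduces non-denseness from the inclusion $S_2^n \subseteq S_2^2$ together with Theorem~\ref{thm:squares-cubes}(a), whereas you invoke parts (c)--(f) of Theorem~\ref{thm:power-sums}; both are valid.
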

\begin{proof}
First, assume $p = 2$ and $n \in \{2, 4, 8, 16\}$.
Then, it can be easily checked that $-1$ is not an $n$th power modulo $p^{2k + 1}$.
By Theorem~\ref{thm:squares-cubes}, $R(S_2^2)$ is not dense in $\mathbb{Q}_p$ and, since $S_2^n \subseteq S_2^2$, we get that $R(S_2^n)$ is not dense in $\mathbb{Q}_p$.
Now assume $(n, p) \notin \{(2, 2), (4, 2), (8, 2), (16, 2)\}$.
By Theorem~\ref{thm:power-sums}, we have that $R(S_2^n)$ is dense in $\mathbb{Q}_p$ if and only if there exist integers $0 \leq x_1, x_2 < p^{2k + 1}$, not both divisible by $p$, such that $x_1^n + x_2^n$ is divisible by $p^{2k + 1}$.
It easy to see that this last condition is equivalent to the $-1$ being an $n$th power modulo $p^{2k + 1}$.
\end{proof}

In \cite[Problem~4.3]{MR3670202} it is asked about the denseness in $\mathbb{Q}_p$ of $R(S_m^4)$ and $R(S_m^5)$.
From Corollary~\ref{cor:minusone}, we have that $R(S_m^5)$ is dense in $\mathbb{Q}_p$ for all integers $m \geq 2$ and prime numbers $p$.
Regarding $R(S_m^4)$, the situation is more complicated.
Theorem~\ref{thm:power-sums}(d) already covers the case $p = 2$.
For $p > 2$ we have the following result.

\begin{thm}\label{thm:fourth-fifth}
For all prime numbers $p > 2$, we have:
\begin{enumerate}[\phantom{m}(a)]
\setlength{\itemsep}{0.5em}
\item $R(S_2^4)$ is dense in $\mathbb{Q}_p$ if and only if $p \equiv 1 \pmod 8$.
\item $R(S_3^4)$ is dense in $\mathbb{Q}_p$ if and only if $p \neq 5,29$.
\item $R(S_4^4)$ is dense in $\mathbb{Q}_p$ if and only if $p \neq 5$.
\item $R(S_m^4)$ is dense in $\mathbb{Q}_p$ for all integers $m \geq 5$.
\end{enumerate}
\end{thm}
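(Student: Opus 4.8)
The plan is to reduce the statement entirely to the computation of $\theta(4,p)$ by means of Theorem~\ref{thm:power-sums}, and then to carry out that computation by a short case analysis combined with the Weil bound for curves.

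First I would note that, since $p>2$, we have $\nu_p(4)=0$, so $p^{2\nu_p(4)+1}=p$, and $(4,p)$ is never one of the four exceptional pairs appearing in Theorem~\ref{thm:power-sums}. Hence parts (a) and (b) of that theorem yield, in one stroke, the equivalence: $R(S_m^4)$ is dense in $\mathbb{Q}_p$ if and only if $m\geq\theta(4,p)$. Thus it suffices to prove that $\theta(4,p)=2$ when $p\equiv 1\pmod 8$, that $\theta(4,p)=3$ when $p\not\equiv 1\pmod 8$ and $p\notin\{5,29\}$, that $\theta(4,29)=4$, and that $\theta(4,5)=5$; since these cases are exhaustive and all the values are at most $5$, the four assertions (a)--(d) follow by reading off the appropriate inequality $m\geq\theta(4,p)$.

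Next I would reformulate $\theta(4,p)$. Let $H\subseteq\mathbb{F}_p^\times$ be the set of nonzero fourth powers modulo $p$. Dividing a congruence $X_1^4+\cdots+X_g^4\equiv 0\pmod p$ having some $X_i$ coprime to $p$ by $X_i^4$ shows that $\theta(4,p)=1+w(p)$, where $w(p)$ is the least number of elements of $H$ having sum $\equiv -1\pmod p$. Part (a) then becomes the statement $w(p)=1\iff -1\in H\iff p\equiv 1\pmod 8$, which is precisely Corollary~\ref{cor:minusone} applied with $n=4$. For the remaining parts I have to show that $w(p)=2$ for every odd prime $p\notin\{5,29\}$ with $p\not\equiv 1\pmod 8$, while $w(29)=3$ and $w(5)=4$. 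If $p\equiv 3\pmod 4$ then $H$ is the set of nonzero squares, and since the equation $x^2+y^2=-1$ has $p+1$ solutions in $\mathbb{F}_p^2$, none of which has a vanishing coordinate (because $-1$ is a nonsquare), we get $w(p)=2$ at once. If $p\equiv 5\pmod 8$ I would count $M:=\#\{(a,b)\in\mathbb{F}_p^2 : a^4+b^4=-1\}$: the projective curve $X^4+Y^4+Z^4=0$ is a smooth plane quartic (as $p>2$), hence of genus $3$; it has no point with $Z=0$, and $M$ receives no contribution from $a=0$ or $b=0$, since $-1\notin(\mathbb{F}_p^\times)^4$; therefore the Weil bound gives $M\geq p+1-6\sqrt p$, which is positive as soon as $p\geq 37$, whence $-1\in H+H$ and $w(p)=2$. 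The finitely many odd primes $p<37$ with $p\not\equiv 1\pmod 8$, namely $p\in\{3,5,7,11,13,19,23,29,31\}$, I would settle by direct inspection of $H$, $H+H$ and $H+H+H$: for all of them except $5$ and $29$ one has $-1\in H\cup(H+H)$; for $p=29$ one checks $-1\notin H\cup(H+H)$ but $-1\equiv 25+25+7\pmod{29}$ with $25,7\in H$, giving $w(29)=3$; and for $p=5$ one has $H=\{1\}$, forcing $w(5)=4$. Substituting $\theta(4,p)=1+w(p)$ into the equivalence above completes the proof of (a)--(d).

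The hard part will be only the middle step: arranging the Weil bound so that its threshold is explicit and small enough to leave a truly short list of primes to check, and then verifying the exceptional behaviour at $p=5$ and $p=29$; everything else is a formal consequence of Theorem~\ref{thm:power-sums} and Corollary~\ref{cor:minusone}. If one wishes to avoid the language of curves, the count $M$ can instead be obtained by expanding the indicator function of the fourth powers into multiplicative characters and bounding the resulting Jacobi sums by $\sqrt p$; this gives a somewhat larger threshold and hence a longer, but still finite, check of small primes.
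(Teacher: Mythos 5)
Your reduction is correct and coincides with the paper's: both arguments funnel everything through Theorem~\ref{thm:power-sums} (noting that $(4,p)$ is never an exceptional pair for $p>2$, so denseness of $R(S_m^4)$ is equivalent to $m\geq\theta(4,p)$), and both dispose of part (a) via Corollary~\ref{cor:minusone} together with the classical fact that $-1$ is a fourth power mod $p$ iff $p\equiv 1\pmod 8$. Where you diverge is in the key quantitative step, namely showing $\theta(4,p)\leq 3$ outside a short list of primes. The paper simply quotes Small's result that $\gamma(4,p)=2$ for $p>41$, uses $\theta(n,b)\leq\gamma(n,b)+1$, and finishes the primes up to $41$ by computation, recording $\theta(4,5)=5$ and $\theta(4,29)=4$. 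You instead give a self-contained argument: for $p\equiv 3\pmod 4$ the fourth powers are the squares and the exact count $p+1$ of solutions of $x^2+y^2=-1$ (none with a zero coordinate, as $-1$ is a nonsquare) gives $\theta(4,p)=3$ elementarily; for $p\equiv 5\pmod 8$ the Weil bound on the smooth genus-$3$ Fermat quartic yields at least $p+1-6\sqrt p>0$ points for $p\geq 37$, all affine with nonzero coordinates since $-1$ is not a fourth power, again forcing $\theta(4,p)=3$; the residual primes are checked by hand, and your values $w(29)=3$, $w(5)=4$ reproduce the paper's $\theta(4,29)=4$, $\theta(4,5)=5$. Both routes are sound; the paper's is shorter because it outsources the hard estimate to Small (whose proof itself rests on character-sum bounds), while yours makes the estimate explicit, slightly improves the threshold ($37$ versus $41$), and isolates exactly which congruence classes need the nontrivial bound. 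One minor remark: your list of primes below $37$ to inspect can be pruned, since those congruent to $3\pmod 4$ are already covered by your elementary count, leaving only $5$, $13$, and $29$ to check directly.
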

\begin{proof}
By Corollary~\ref{cor:minusone}, $R(S_2^4)$ is dense in $\mathbb{Q}_p$ if and only if $-1$ is a fourth power modulo $p$.
In turn, this is well known to be equivalent to $p \equiv 1 \pmod 8$.
Hence, (a) is proved.
Substituting $a = -1$ into \eqref{equ:Xg}, the bound $\theta(n, b) \leq \gamma(n, b) + 1$ follows.
From \cite[Theorem~$3^\prime$]{MR0439787}, we have that $\gamma(4, p) = 2$ for all prime numbers $p > 41$.
Hence, $\theta(4, p) \leq 3$ for all prime numbers $p > 41$.
Then, a computation shows that $\theta(4, p) \leq 3$ for all prime numbers $p \neq 5, 29$.
Precisely, $\theta(4, 5) = 5$ and $\theta(4, 29) = 4$.
Now the claims (b), (c), and (d) follow from Theorem~\ref{thm:power-sums}.
\end{proof}

We leave the following general question to the readers.

\begin{que}
Given a prime number $p$ and a polynomial $f \in \mathbb{Z}[X]$, is there an effective criterion to establish if $R_f$ is dense in $\mathbb{Q}_p$?
What about multivariate polynomials?
\end{que}

\begin{rmk}\label{rmk:mistake}
In \cite[Theorem~4.2]{MR3670202} it is stated that $R(S_2^3)$ is not dense in $\mathbb{Q}_3$.
This is not correct, since $R(S_2^3)$ is dense in $\mathbb{Q}_3$ in light of Corollary~\ref{cor:minusone}.
The mistake in the proof of \cite[Theorems~4.2]{MR3670202} is when, at point (b2), it is asserted that: ``If $x/y \in R(S_2^3)$ is sufficiently close to $3$ in $\mathbb{Q}_3$, then $\nu_3(x) = \nu_3(y) + 1$. Without loss of generality, we may suppose that $\nu_3(x) = 1$ and $\nu_3(y) = 0$.''
This is not true, because if $y$ is the sum of two cubes, then there is no guarantee that $y / 3^{\nu_3(y)}$ is still the sum of two cubes.
For instance, if $y = 1^3 + 5^3$ then $y / 3^{\nu_3(y)} = 14$ is not the sum of two cubes.
\end{rmk}

\subsection*{Notation}

For each prime number $p$, let $\nu_p$ denote the usual $p$-adic valuation, with the convention $\nu_p(0) := +\infty$.
For integers $a$ and $m > 0$, we write $(a \bmod m)$ for the unique integer $r \in {]{-b/2}, b/2]}$ such that $a - r$ is divisible by $m$.

\section{Proof of Theorem~\ref{thm:dens-cond}}\label{sec:2}

We have to prove that for all $r \in \mathbb{Q}_p$ and $u > 0$ there exist $x_1, x_2 \in \mathbb{Z}^+$ such that $f(x_2) \neq 0$ and 
\begin{equation*}
\nu_p\!\left(\frac{f(x_1)}{f(x_2)} - r\right) > u .
\end{equation*}
Clearly, since $\mathbb{Q}_p^*$ is dense in $\mathbb{Q}_p$, it is enough to consider $r \neq 0$.
Furthermore, since $\mathbb{Z}^+$ is dense in $\mathbb{Z}_p$ and $f$ is continuous, we can assume, less restrictively, $x_1, x_2 \in \mathbb{Z}_p$.
By hypothesis, for $i = 1, 2$, we have $f(X) = (X - z_i)^{\mu_i}g_i(X)$, where $g_i : \mathbb{Z}_p \to \mathbb{Q}_p$ is an analytic function such that $g_i(z_i) \neq 0$.
Put $x_i := y_i p^{k_i} + z_i$, for $i=1,2$, where $y_1, y_2 \in \mathbb{Z}_p \setminus \{0\}$ and $k_1, k_2 \in \mathbb{Z}^+$ will be chosen later.
Without loss of generality, we can assume $\nu_p(g_1(z_1)) \leq \nu_p(g_2(z_2))$.
Thus, setting $G := g_2(z_2) / g_1(z_1)$, we have $G \in \mathbb{Z}_p \setminus \{0\}$.
Since $g_1, g_2$ are continuous, for sufficiently large $k_1, k_2$ we have
\begin{equation}\label{equ:G}
\nu_p\!\left(G \cdot\frac{g_1(x_1)}{g_2(x_2)} - 1\right) > u - \nu_p(r) ,
\end{equation}
In particular, it is implicit that $g(x_2) \neq 0$ and consequently $f(x_2) \neq 0$.
We fix $k_1, k_2$ such that
\begin{equation*}
k_1\mu_1 - k_2\mu_2 = \nu_p(r),
\end{equation*}
and \eqref{equ:G} holds.
This is possible thanks to the condition $\gcd(\mu_1 , \mu_2) = 1$.
Indeed, by B\'ezout's lemma, the quantity $k_1 \mu_1 - k_2 \mu_2$ can be equal to any integer with $k_1$ and $k_2$ arbitrarily large (if $k_1 \mu_1 - k_2 \mu_2 = a$, then $(k_1 + K\mu_2)\mu_1 - (k_2 + K\mu_1 )\mu_2 = a$, for any integer $K$).

Again by B\'ezout's lemma, there exist integers $h_1, h_2 \geq 0$ such that $h_1 \mu_1 - h_2 \mu_2 = 1$.
We set $y_i = s^{h_i}$, for $i=1,2$, where $s := p^{-\nu_p(r)} r G$.
Note that $y_1, y_2 \in \mathbb{Z}_p \setminus\{0\}$, as required.

Hence, we have
\begin{align*}
\frac{f(x_1)}{f(x_2)} &= \frac{(x_1 - z_1)^{\mu_1}}{(x_2 - z_2)^{\mu_2}} \cdot \frac{g_1(x_1)}{g_2(x_2)} = p^{k_1\mu_1 - k_2 \mu_2} \cdot \frac{y_1^{\mu_1}}{y_2^{\mu_2}}\cdot \frac{g_1(x_1)}{g_2(x_2)} \\
&= p^{\nu_p(r)} \cdot s^{h_1\mu_1 - h_2 \mu_2}\cdot \frac{g_1(x_1)}{g_2(x_2)} = p^{\nu_p(r)} \cdot s\cdot \frac{g_1(x_1)}{g_2(x_2)} = r G\cdot \frac{g_1(x_1)}{g_2(x_2)} ,
\end{align*}
so that, recalling \eqref{equ:G}, we get
\begin{equation*}
\nu_p\!\left(\frac{f(x_1)}{f(x_2)} - r\right) = \nu_p\!\left(r\left(G\cdot\frac{g_1(x_1)}{g_2(x_2)} - 1\right)\right) > u ,
\end{equation*}
as desired.

\section{Proof of Theorem~\ref{thm:power-sums}}\label{sec:3}

(a) Suppose that there exist integers $0 \leq x_1, \ldots, x_m < p^{2k + 1}$, not all divisible by $p$, such that $x_1^n + \dots + x_m^n$ is divisible by $p^{2k + 1}$.
Up to reordering $x_1, \dots, x_m$, we can assume that $p \nmid x_1$.
Put $f(X) = X^n + x_2^n + \cdots + x_m^n$, so that $f^\prime(X) = n X^{n - 1}$.
In particular, all the roots of $f$ are simple.
Since $p \nmid x_1$, we have
\begin{equation*}
\nu_p(f(x_1)) \geq 2k + 1 > 2k = 2\nu_p(f^\prime(x_1)) ,
\end{equation*}
so that, by Hensel's lemma~\cite[Ch.~4, Lemma~3.1]{CASSELS}, $f$ has a simple root in $\mathbb{Z}_p$.
Hence, by Corollary~\ref{cor:dens-cond}, $R_f$ is dense in $\mathbb{Q}_p$.
Clearly, $R_f \subseteq R(S_m^n)$, so that $R(S_m^n)$ is dense in $\mathbb{Q}_p$.

(b) Suppose that there are no integers $x_1, \dots, x_m$ as before, and that 
\begin{equation}\label{equ:excluded}
(n,p) \notin \{(2, 2), (4, 2), (8, 2), (16, 2)\} .
\end{equation}
We shall prove that $4k + 1 < n$.
For the sake of contradiction, suppose $4k + 1 \geq n$.
Since $n \geq 2$, we have $k \geq 1$.
Also, we have $4k + 1 \geq p^k$, which implies $p \leq 5$.
Now, taking into account \eqref{equ:excluded}, it can be readily checked that
\begin{equation*}
(n,p) \in \{(3, 3), (9, 3), (5, 5)\} .
\end{equation*}
But $3^3 \!\mid\! (1^3 + 8^3)$, $3^5 \!\mid\! (1^9 + 26^9)$, and $5^3 \!\mid\! (1^5 + 24^5)$, contradicting the nonexistence of $x_1, \dots, x_m$.

Let $y_1, \dots, y_m \geq 0$ be integers, not all equal to zero.
Put $\mu := \min\{\nu_p(y_i) : i = 1,\dots,m\}$, $I := \{i : \nu_p(y_i) = \mu\}$, and $J := \{1,\dots,m\} \setminus I$.
Also, put $z_i := y_i / p^\mu$ for $i \in I$, so that $z_i$ is an integer not divisible by $p$.
The nonexistence of $x_1, \dots, x_m$ implies that
\begin{equation}\label{equ:sumzi}
\nu_p\!\left(\sum_{i \in I} z_i^n \right) \leq 2k .
\end{equation}
Therefore, since $2k < n$, we have
\begin{equation*}
\nu_p\!\left(\sum_{i \in I} y_i^n \right) = \mu n + \nu_p\!\left(\sum_{i \in I} z_i^n \right) \leq \mu n + 2k < (\mu + 1) n \leq \nu_p\!\left(\sum_{j \in J} y_j^n \right) ,
\end{equation*}
and consequently
\begin{equation*}
\nu_p\!\left(y_1^n + \cdots + y_m^n \right) = \nu_p\!\left(\sum_{i \in I} y_i^n \right) = \mu n + \nu_p\!\left(\sum_{i \in I} z_i^n \right) ,
\end{equation*}
which in turn, by \eqref{equ:sumzi}, implies that
\begin{equation*}
(\nu_p\!\left(y_1^n + \cdots + y_m^n \right) \bmod n) \in \{0, \ldots,2k\} .
\end{equation*}
Thus, for each $a \in R(S_m^n) \setminus \{0\}$ we have
\begin{equation*}
(\nu_p(a) \bmod n) \in \{-2k,\ldots,2k\} ,
\end{equation*}
that is, the $p$-adic valuations of the nonzero elements of $R(S_m^n)$ belong to at most $4k + 1$ residue classes modulo $n$.
Since $4k + 1 < n$, at least one residue class modulo $n$ is missing and, a fortiori, $R(S_m^n)$ is not dense in $\mathbb{Q}_p$.

(c) The claim follows immediately from Theorem~\ref{thm:squares-cubes}.

From now on, assume $n = 2^k$, with $k \in \{2, 3, 4\}$.
Let $T_m^n$ be the topological closure of $S_m^n$ in $\mathbb{Q}_2$.
Clearly, we have
\begin{equation*}
T_m^n = \left\{x_1^n + \cdots + x_m^n : x_1, \ldots,x_m \in \mathbb{Z}_2 \right\} .
\end{equation*}
It is a standard exercise showing that the nonzero $n$th powers of $\mathbb{Z}_2^*$ are exactly the elements of the form $1 + 4ny$, with $y \in \mathbb{Z}_2$.
As a consequence, 
\begin{equation*}
T_1^n = \{2^{nv}(1 + 4ny) : v \in \mathbb{Z}_{\geq 0},\, y \in \mathbb{Z}_2\} \cup \{0\} .
\end{equation*}
Let $v_1, v_2 \geq 0$, $j \geq 1$ be integers and $y_1, y_2 \in \mathbb{Z}_2$.
If $v_1 = v_2$, then
\begin{equation*}
2^{nv_1}(j + 4ny_1) + 2^{nv_2}(1 + 4ny_2) = 2^{nv_1}(j + 1 + 4nz) ,
\end{equation*}
where $z := y_1 + y_2 \in \mathbb{Z}_2$.
If $v_1 < v_2$, then
\begin{equation*}
2^{nv_1}(j + 4ny_1) + 2^{nv_2}(1 + 4ny_2) = 2^{nv_1}(j + 4nz) ,
\end{equation*}
where $z := y_1 + 2^{n(v_2 - v_1)-k-2}(1 + 4ny_2) \in \mathbb{Z}_2$, since $n = 2^k \geq k + 2$.
If $v_1 > v_2$, then
\begin{equation*}
2^{nv_1}(j + 4ny_1) + 2^{nv_2}(1 + 4ny_2) = 2^{nv_2}(1 + 4nz) ,
\end{equation*}
where $z := 2^{n(v_1 - v_2)-k-2}(j + 4ny_1) + y_2 \in \mathbb{Z}_2$, again since $n \geq k + 2$.

Therefore, it follows easily by induction on $m$ that
\begin{equation}\label{equ:Tmn}
T_m^n = \{2^{nv}(j + 4ny) : v \in \mathbb{Z}_{\geq 0},\, j \in \{1,\dots,m\},\, y \in \mathbb{Z}_2\} \cup \{0\} .
\end{equation}

(d) On the one hand, using \eqref{equ:Tmn}, it can be checked quickly that $15 \notin R(T_{7}^4)$.
Hence, $R(S_{7}^4)$ is not dense in $\mathbb{Q}_2$.
On the other hand, we have
\begin{equation*}
2^{4v + r}(1 + 2y) = \frac{2^{4v}(8 + 16y)}{2^{4\cdot0}(2^{3-r} + 16\cdot 0)} \in R(T_8^4) ,
\end{equation*}
for all $v \in \mathbb{Z}_{\geq 0}$, $r \in \{0,1,2,3\}$, and $y \in \mathbb{Z}_2$.
Hence, $\mathbb{Z}_p \subseteq R(T_8^4)$ and, since $R(T_8^4)$ is closed by inversion, we get that $R(T_8^4) = \mathbb{Q}_p$.
Thus $R(S_8^4)$ is dense in $\mathbb{Q}_p$.

(e) On the one hand, by \eqref{equ:Tmn}, the $2$-adic valuation of each nonzero element of $T_{15}^8$ is congruent to $0$, $1$, $2$, or $3$ modulo $8$.
Hence, $R(T_{15}^8)$ contains no element with $2$-adic valuation equal to $4$, and consequently $R(S_{15}^8)$ is not dense in $\mathbb{Q}_2$.
On the other hand, we have
\begin{equation*}
2^{8v + r}(1 + 2y) = \frac{2^{8v}(16 + 32y)}{2^{8\cdot0}(2^{4-r} + 32\cdot 0)} \in R(T_{16}^8) 
\end{equation*}
and
\begin{equation*}
2^{8v + r + 4}(1 + 2y) = \frac{2^{8(v+1)}(2^r + 32\cdot0)}{2^{8\cdot0}(16 + 32\frac{-y}{1+2y})} \in R(T_{16}^8) 
\end{equation*}
for all $v \in \mathbb{Z}_{\geq 0}$, $r \in \{0,1,2,3,4\}$, and $y \in \mathbb{Z}_2$.
Hence, $\mathbb{Z}_p \subseteq R(T_{16}^8)$ and, since $R(T_{16}^8)$ is closed by inversion, we get that $R(T_{16}^8) = \mathbb{Q}_p$.
Thus $R(S_{16}^8)$ is dense in $\mathbb{Q}_p$.

(f) On the one hand, by \eqref{equ:Tmn}, the $2$-adic valuation of each nonzero element of $T_{63}^{16}$ is congruent to $0$, $1$, $2$, $3$, $4$, or $5$ modulo $16$.
Hence, $R(T_{63}^{16})$ contains no element with $2$-adic valuation equal to $6$, and consequently $R(S_{63}^{16})$ is not dense in $\mathbb{Q}_2$.
On the other hand, $2^9$ divides $5^{16} + 1^{16} + \cdots + 1^{16}$ ($63$ times $1^{16}$).
Hence, by point (a), we get that $R(S_{64}^{16})$ is dense in $\mathbb{Q}_2$.

\subsection*{Acknowledgments}

The authors thanks the anonymous referee for carefully reading the paper. 
N.~Murru and C.~Sanna are members of the INdAM group GNSAGA.

\end{document}